\newcommand{\prob}{\stackrel{P}{\longrightarrow}}
\newcommand{\one}{{\bf 1}}
\newcommand{\reals}{{\mathbb R}}
\newcommand{\bbr}{\reals}
\newcommand{\vep}{\varepsilon}
\newcommand{\bbz}{\protect{\mathbb Z}}
\newcommand{\chs}{\cite{chakrabarty:hazra:sarkar:2016} }
\newcommand{\chsns}{\cite{chakrabarty:hazra:sarkar:2016}}
\newtheorem{theorem}{Theorem}[section]
\newtheorem{ansatz}{Ansatz}[section]
\newtheorem{lemma}[ansatz]{Lemma}
\newtheorem{remark}{Remark}[section]
\def\Cov{{\rm Cov}}
\def\Var{{\rm Var}}
\def\E{{\rm E}}
\numberwithin{equation}{section}
\begin{document}
\title[Largest eigenvalue of positive mean Gaussian matrices]
{Largest eigenvalue of positive mean Gaussian matrices}
\dedicatory{
Dedicated to the memory of Prof. K. R. Parthasarathy}

\author{Arijit Chakrabarty}
\address{Theoretical Statistics and Mathematics Unit, Indian Statistical Institute, Kolkata, India}
\email{arijit.isi@gmail.com}
\author{Rajat Subhra Hazra}
\address{Mathematical Institute, Leiden University, Leiden, The Netherlands}
\email{r.s.hazra@math.leidenuniv.nl}
\author{Moumanti Podder}
\address{Department of Mathematics, Indian Institute of Science Education and Research Pune, India}
\email{moumantip3@gmail.com}

\subjclass{Primary 60G15. Secondary 60B20.}
\keywords{Gaussian process,  random matrices, largest eigenvalue}

\begin{abstract}
    This short note studies the fluctuations of the largest eigenvalue of symmetric random matrices with correlated Gaussian entries having positive mean. Under the assumption that the covariance kernel is absolutely summable, it is proved that the largest eigenvalue, after centering, converges in distribution to normal with an explicitly defined mean and variance. This result generalizes known findings for Wigner matrices with independent entries. 

    Gaussian distributions in finite and infinite dimensions fascinated Prof. K. R. Parthasarathy or KRP as he was lovingly called. From the vast ocean of subjects of interest to KRP, the authors chose this problem on Gaussian random matrices to pay their tributes to him. KRP will remain an inspiration forever.
\end{abstract}
\maketitle
\section{Introduction}
Wigner matrices with independent Gaussian entries (real or complex) have long been a foundation of random matrix theory. The joint distribution of the eigenvalues of such random matrices can be explicitly written down, allowing many interesting properties like the empirical distribution, the largest eigenvalue, gaps between eigenvalues, and local laws to be derived from the closed-form expression. An extension that has gathered considerable attention is the generalization of these results under minimal assumptions on the distribution of the entries, while still maintaining their independence. Such results fall under the category of universality theorems in random matrix theory. For further details on these classical results, the reader is referred to \cite{AGZ:book}, and to \cite{Erdos-Yau:book} for recent developments in local limit laws and their universality. There has also been various works on relaxing the independence assumptions, especially when considering the empirical spectral distribution of the matrices (\cite{AZ:ESD, Banna:Florence:Magda, chakrabarty:hazra:sarkar:2016, Ajanki:Erdos:Kruger, Che}). Additionally, there have been recent developments concerning bounds on the spectral norm of Wigner matrices with dependent entries (\cite{Erdos:Kruger:Schroder, Jana}). These works consider general entries with summability conditions on the correlations. It is well known that in the independent case with zero-mean entries, and under some moment conditions, the largest eigenvalue of the matrix scaled by $\sqrt{N}$ converges to $2$ almost surely \cite{bai1988necessary} and, after appropriate scaling and centering, converges weakly to the Tracy-Widom law \cite{Lee:Yin}. 

The classical result relevant to this article is the following. When the entries are independent, uniformly bounded, have positive mean $\mu$ and variance of the off-diagonal entries is $\eta^2$, it was proved in \cite{furedi1981eigenvalues} that the largest eigenvalue $\lambda_1$ of the matrix satisfies 
\begin{equation}\label{eq.FK}
\lambda_1 - N\mu \Rightarrow N\left(\frac{\eta^2}{\mu}, 2\eta^2\right), \quad N \to \infty.    
\end{equation}
The aim of this short note is to provide a proof of this result for a Gaussian random matrix with correlated entries under the assumption that correlations are absolutely summable and the sum is non-zero. The result of F{\"u}redi and Koml{\'o}s (\cite{furedi1981eigenvalues}) has been extended in various directions, one of which is applications in the study of the largest eigenvalue of the adjacency matrix of the Erd\H{o}s-R\'enyi random graph (\cite{erdHos2013spectral}) and in its inhomogeneous extensions (\cite{chakrabarty2020eigenvalues}). The approach followed in this article resembles that of these random graphs, though addressing the lack of independence among the entries is new.

\medskip\noindent
\paragraph{\bf Outline:} In Section \ref{sec:setup}, the model is defined and the main result (Theorem \ref{t1}) on the maximum eigenvalue is stated. In Section \ref{sec:proof}, the proof of the main result is provided.

\section{The set-up}\label{sec:setup}
The process considered here is a mean-shifted version of that in \chsns. Let $(Z_{ij}:i,j\in\bbz)$ be a stationary Gaussian process with mean $\theta$, where $\theta>0$ is fixed. Let $A_N$ be an $N\times N$ symmetric random matrix defined by
\begin{equation}\label{def:Amatrix}
A_N(i,j)=Z_{ij}+Z_{ji}\,,1\le i,j\le N\,.
\end{equation}
The goal is to study the asymptotic behaviour of the largest eigenvalue of $A_N$,  which will be denoted by $\lambda_1(A_N)$,  after a centering and suitable assumptions on $(Z_{ij})$.  Interestingly,  $\lambda_1(A_N)$ will require no scaling in the regime considered below.

Let $R(\cdot,\cdot)$ be the covariance kernel of $(Z_{ij})$,  defined by
\[
R(i,j)=\Cov(Z_{00},Z_{ij})\,,i,j\in\bbz\,.
\]
The following is the main assumption.

\noindent\textbf{Assumption.} The covariance kernel $R$ is absolutely summable, that is,
\begin{equation}
\label{setup.eq1}\sum_{i, j=-\infty}^\infty|R(i,j)|<\infty\,,
\end{equation}
and the sum is non-zero, that is,
\begin{equation}
\label{setup.eq2} \sum_{i,j=-\infty}^\infty R(i,j)\neq0\,.
\end{equation}

The following is the main result.

\begin{theorem}\label{t1}
Under the stated assumption,
\[
\lambda_1(A_N)-2N\theta\Rightarrow N\left(\alpha,\sigma^2\right)\,, N\to\infty\,,
\]
where
\[
\alpha=\frac1{2\theta}\sum_{i=-\infty}^\infty \big( R(i,0)+R(0,i)\big)\,,
\]
and
\[
\sigma^2=4\sum_{i,j=-\infty}^\infty R(i,j)\,.
\]
\end{theorem}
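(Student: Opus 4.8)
The plan is to realise $A_N$ as a rank-one additive perturbation of the centred matrix studied in \cite{chakrabarty:hazra:sarkar:2016} and to run a resolvent (secular-equation) expansion around the resulting outlier eigenvalue. Write $v_N:=N^{-1/2}\one\in\bbr^N$ and $W_N:=A_N-\E A_N$, so that $W_N(i,j)=(Z_{ij}-\theta)+(Z_{ji}-\theta)$ and, since every entry of $A_N$ has mean $2\theta$,
\[
A_N=2N\theta\,v_Nv_N^\top+W_N .
\]
The one external input I would invoke is an operator-norm bound $\|W_N\|=O_P(\sqrt N)$ for this centred matrix (only $\|W_N\|=o_P(N)$ is strictly needed); for Gaussian entries with summable covariance this is standard and is available from the moment estimates behind \cite{chakrabarty:hazra:sarkar:2016} or from \cite{Erdos:Kruger:Schroder}. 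Given it, Weyl's inequality gives $|\lambda_1(A_N)-2N\theta|\le\|W_N\|$, and interlacing for rank-one perturbations puts the remaining $N-1$ eigenvalues of $A_N$ in $[-\|W_N\|,\|W_N\|]$; hence for all large $N$ the value $\lambda:=\lambda_1(A_N)$ is the unique eigenvalue exceeding $\|W_N\|$, it satisfies $\lambda=2N\theta(1+o_P(1))$, its eigenvector is not orthogonal to $v_N$, and projecting the eigen-equation onto $v_N$ produces the secular equation $1=2N\theta\,v_N^\top(\lambda I-W_N)^{-1}v_N$.

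Next I would expand the resolvent in a Neumann series (legitimate since $\lambda>\|W_N\|$):
\[
\lambda-2N\theta=\frac{2N\theta}{\lambda}\,v_N^\top W_Nv_N+\frac{2N\theta}{\lambda^2}\,v_N^\top W_N^2v_N+\sum_{k\ge 3}\frac{2N\theta}{\lambda^{k+1}}\,v_N^\top W_N^kv_N .
\]
The $k\ge 3$ tail is bounded by $\sum_{k\ge3}2N\theta\,\lambda^{-k-1}\|W_N\|^k=O_P(N^{-3/2})$, and replacing $2N\theta/\lambda$ by $1$ and $2N\theta/\lambda^2$ by $1/(2N\theta)$ costs only $o_P(1)$, using $|\lambda-2N\theta|=o_P(N)$, $v_N^\top W_Nv_N=O_P(1)$, and $v_N^\top W_N^2v_N=O_P(N)$ (from the analysis below). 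Thus the problem reduces to
\[
\lambda_1(A_N)-2N\theta=v_N^\top W_Nv_N+\frac{1}{2N\theta}\,v_N^\top W_N^2v_N+o_P(1).
\]

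The hypotheses on $R$ enter in identifying the limits. The linear term is $v_N^\top W_Nv_N=\tfrac2N\sum_{i,j=1}^N(Z_{ij}-\theta)$, a centred Gaussian with variance $\tfrac{4}{N^2}\sum_{i,j,k,l=1}^N R(k-i,l-j)$; by dominated convergence (dominating summand $|R|$, summable by \eqref{setup.eq1}) this tends to $4\sum_{i,j\in\bbz}R(i,j)=\sigma^2$, so the linear term converges weakly to $N(0,\sigma^2)$, and $\sigma^2>0$ because it is a limit of variances and is non-zero by \eqref{setup.eq2}. For the quadratic term put $S_i:=(W_N\one)_i$, so that $\tfrac{1}{2N\theta}v_N^\top W_N^2v_N=\tfrac{1}{2N^2\theta}\sum_{i=1}^N S_i^2$; expanding $\E[S_i^2]=\Var(S_i)$ via stationarity, the two "diagonal" contributions equal $N\sum_m R(0,m)+o(N)$ and $N\sum_m R(m,0)+o(N)$ while the "cross" contribution is $O(1)$ uniformly in $i$, so $\tfrac{1}{2N^2\theta}\sum_i\E[S_i^2]\to\tfrac{1}{2\theta}\sum_i\bigl(R(i,0)+R(0,i)\bigr)=\alpha$. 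Finally, Gaussianity gives $\Cov(S_i^2,S_{i'}^2)=2\Cov(S_i,S_{i'})^2$, and the bounds $\max_{i,i'}|\Cov(S_i,S_{i'})|=O(N)$ and $\sum_{i,i'}|\Cov(S_i,S_{i'})|=O(N^2)$ — both immediate from \eqref{setup.eq1} — yield $\Var\bigl(\tfrac{1}{2N^2\theta}\sum_i S_i^2\bigr)=O(N^{-1})$, so the quadratic term converges to $\alpha$ in probability. Slutsky's theorem then delivers $\lambda_1(A_N)-2N\theta\Rightarrow N(\alpha,\sigma^2)$.

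I expect the resolvent expansion to be routine once the norm bound is granted. The main obstacle is the second-order term: one must track precisely which lattice shifts survive in $\E\|W_N\one\|^2$ so that the limit is exactly $\alpha$ (and not some other linear functional of $R$), and — more delicate — one must control the covariances $\Cov(S_i,S_{i'})$ uniformly well enough, from absolute summability of $R$ alone, to get the $o(N^4)$ bound on $\Var(\sum_i S_i^2)$ needed for concentration; absolute summability is exactly enough to push the $O(N^3)$ bound through, as the product of an $\ell^\infty$ and an $\ell^1$ estimate, and a weaker decay hypothesis would not obviously suffice.
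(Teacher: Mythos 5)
Your proposal is correct and its skeleton coincides with the paper's: write $A_N=2\theta\one\one'+W_N$, derive the secular equation $1=2\theta\one'(\lambda_1I-W_N)^{-1}\one$ from the top eigenvector, expand the resolvent to second order, obtain the $N(0,\sigma^2)$ fluctuation from the exactly Gaussian linear term $\one'W_N\one$ and the shift $\alpha$ from $\one'W_N^2\one$, and control the tail by an operator-norm bound. Two differences from the paper are worth noting. First, you import $\|W_N\|=O_p(\sqrt N)$ as an external fact, whereas the paper proves it (Lemma \ref{l1}) by an $\vep$-net over $S^{N-1}$ combined with a Gaussian moment-generating-function estimate, precisely because under the bare assumption \eqref{setup.eq1} neither \cite{Erdos:Kruger:Schroder} nor \cite{Jana} applies off the shelf (they require additional structure or decay), and convergence of the empirical spectral distribution from \cite{chakrabarty:hazra:sarkar:2016} does not by itself control the extreme eigenvalue of $W_N$. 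Your fallback observation that $\|W_N\|=o_p(N)$ suffices is accurate and rescues the step: a fourth-moment trace bound, $\E\Tr(W_N^4)=O(N^3)$ under \eqref{setup.eq1} (a Wick computation of the same type as in Lemma \ref{l3}), already yields $\|W_N\|=O_p(N^{3/4})$; but as written this step needs either that computation or the paper's net argument to be spelled out. Second, your treatment of the quadratic term is genuinely different and arguably cleaner: writing $\one'W_N^2\one=\sum_i S_i^2$ with $S_i=(W_N\one)_i$, using the Gaussian identity $\Cov(S_i^2,S_{i'}^2)=2\Cov(S_i,S_{i'})^2$ and the $\ell^\infty$ times $\ell^1$ bound on $\Cov(S_i,S_{i'})$ gives $\Var(\one'W_N^2\one)=O(N^3)$ self-containedly, whereas the paper (Lemma \ref{l3}) expands fourth moments via Wick's formula and defers the details to the arXiv version of \cite{chakrabarty:hazra:sarkar:2015}. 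Your mean computation for $\sum_i\E S_i^2$, the variance asymptotics for $\one'W_N\one$, the positivity of $\sigma^2$ via \eqref{setup.eq2}, and the identification of $\alpha$ and $\sigma^2$ all match Lemmas \ref{l2} and \ref{l3} and the concluding expansion \eqref{eq.expansion} of the paper.
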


\begin{remark} If \eqref{setup.eq2} weren't true, then statement of the above result would change to
\[
\lambda_1(A_N)-2N\theta\prob0\,,N\to\infty\,.
\]
\end{remark}
\begin{remark}
    In the special case where $R(0,0)=\eta^2/2$ and $R(i,j)=0$ if either $i\neq0$ or $j\neq0$, $A_N$ is a classical Wigner matrix whose off-diagonal entries are from $N(0,\eta^2)$ and diagonal entries follow $N(0,2\eta^2)$. In this case, $\sigma^2=2\eta^2$ and $\alpha=(2\theta)^{-1}\eta^2$. Putting $\mu=2\theta$ in \eqref{eq.FK}, which is indeed the common mean of the entries of $A_N$ in this case, Theorem \ref{t1} can easily be seen to be consistent with the result of \cite{furedi1981eigenvalues}. 
\end{remark}

\section{The proof}\label{sec:proof} The following lemmas will have to be proved first.  Let 
\begin{equation}
\label{proof.eq1}W_N=A_N-\E(A_N)\,,
\end{equation}
that is,  the $(i,j)$-th entry of $W_N$ is $X_{ij}+X_{ji}$ for all $1\le i,j\le N$,  where
\[
X_{ij}=Z_{ij}-\theta\,,i,j\in\bbz\,.
\]
Theorem 2.4 of \chs shows that the limiting spectral distribution (LSD) of $N^{-1/2}W_N$,  as $N\to\infty$,  is a compactly supported probability measure.  The first lemma is based on the intuition that the largest eigenvalue converges to the right endpoint of the support of the LSD, and likewise the smallest one goes to the left endpoint. In what follows, for an $N\times N$ matrix $M$, $\|M\|$ is its operator norm defined by
\[
\|M\|=\max_{x\in S^{N-1}}\|Mx\|\,,
\]
where $S^{N-1}=\{x\in\bbr^N:\|x\|=1\}$ and on $\bbr^N$, $\|\cdot\|$ is the usual Euclidean norm.

\begin{lemma}\label{l1}
The family $\{N^{-1/2}\|W_N\|:N=1,2,\ldots\}$ is stochastically tight, that is,
\[
\|W_N\|=O_p\left(\sqrt N\right)\,.
\]
\end{lemma}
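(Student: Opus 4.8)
The strategy is to control $\|W_N\|$ by its expectation plus Gaussian concentration, and to bound the expectation via the moment (trace) method adapted to the correlated setting. The starting point is the observation that $W_N$ is a Gaussian random matrix, so each entry $W_N(i,j) = X_{ij}+X_{ji}$ is a centered Gaussian, and $x \mapsto \|W_N x\|$ (equivalently the map from the entries to $\|W_N\|$) is a Lipschitz function of the underlying Gaussian vector with Lipschitz constant bounded by a universal constant (the operator norm is $1$-Lipschitz in Frobenius norm, and the linear map from the i.i.d.\ generating Gaussians to the entries has bounded operator norm because $R$ is absolutely summable). Hence by the Gaussian concentration inequality, $\|W_N\| - \E\|W_N\|$ has sub-Gaussian tails with a variance proxy that is $O(1)$ — in particular $O_p(1)$, which is negligible compared to $\sqrt N$. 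So it suffices to show $\E\|W_N\| = O(\sqrt N)$.

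For the expectation, I would use the even-moment trace bound: for any positive integer $k$,
\[
\E\|W_N\|^{2k} \le \E \Tr\left(W_N^{2k}\right) = \sum \E\left[W_N(i_1,i_2)W_N(i_2,i_3)\cdots W_N(i_{2k},i_1)\right],
\]
the sum being over all closed walks of length $2k$ on $\{1,\ldots,N\}$. Expanding each $W_N$-entry into the $X$'s and using Wick's formula, each expectation becomes a sum of products of $k$ covariances $R(\cdot,\cdot)$ over pair partitions of the $2k$ edge-slots. The key point is that absolute summability of $R$ lets one sum out the "free" index increments: as in the LSD computation of Theorem 2.4 of \chs, the dominant contribution comes from pairings that form a tree-like (non-crossing) structure on the walk, for which the number of distinct vertices is $k+1$ and each of the $k$ pair-covariances can be summed freely over $\bbz^2$ contributing a factor $\|R\|_1 := \sum_{i,j}|R(i,j)| < \infty$. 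This yields $\E\Tr(W_N^{2k}) \le C_k N^{k+1}$ for a constant $C_k$ depending only on $k$ and $\|R\|_1$ (with a Catalan-type combinatorial factor), exactly as in the independent Wigner case; crossing or otherwise "wasteful" pairings lose at least one power of $N$. Consequently $\E\|W_N\|^{2k} \le C_k N^{k+1}$, so by Markov's inequality $P(\|W_N\| > t\sqrt N) \le C_k N^{k+1}/(t^{2k}N^k) = C_k N/t^{2k}$; taking $k$ fixed this is not quite enough, so instead I would either let $k = k(N) \to \infty$ slowly, or — cleaner — combine the bound $\E\|W_N\|^{2k} \le C_k N^{k+1}$ at $k$ growing like $\log N$ with the concentration step above to conclude $\E\|W_N\| = O(\sqrt N)$, and then tightness of $N^{-1/2}\|W_N\|$ follows from $\E\|W_N\| = O(\sqrt N)$ plus the $O_p(1)$ fluctuation.

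The main obstacle is the combinatorial bookkeeping in the trace expansion once the entries are correlated: unlike the independent case, a single Wick pairing couples edges that may be far apart in the walk, so one cannot argue graph-by-graph in the naive way. The resolution is that absolute summability decouples them at the level of bounds — one takes absolute values of all covariances, sums each paired covariance independently over all of $\bbz^2$ (bounded by $\|R\|_1$), and is left with a purely topological count of closed walks with a given number of distinct vertices, which is the same count as in the classical proof. I would also need to handle the diagonal terms and the $X_{ij}+X_{ji}$ symmetrization, but these only inflate constants by bounded factors. Since the excerpt already grants Theorem 2.4 of \chs (compactly supported LSD), an alternative shortcut for the expectation bound is to invoke the existing norm estimates in that reference directly; I would mention this but prefer the self-contained trace argument since it makes the $O(\sqrt N)$ rate explicit.
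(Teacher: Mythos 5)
Your overall architecture (Gaussian concentration of $\|W_N\|$ around its mean, plus a bound $\E\|W_N\|=O(\sqrt N)$) is legitimate, and the concentration step is fine: since $R$ is absolutely summable, the covariance operator of $(X_{ij})_{1\le i,j\le N}$ has operator norm at most $\sum_{u,v}|R(u,v)|$, so $\|W_N\|$ is a Lipschitz function of i.i.d.\ Gaussians with an $N$-independent constant. The genuine gap is in the expectation bound. You correctly concede that fixed $k$ is not enough and propose $k\sim\log N$, but then the whole burden is to show $\E\Tr(W_N^{2k})\le C^k N^{k+1}$ with $C$ \emph{independent of} $k$ (or at least with $C_k^{1/(2k)}$ bounded). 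Your justification --- take absolute values, sum each paired covariance freely over $\bbz^2$ at cost $\sum|R|$, and reduce to ``the same count as in the classical proof'' --- does not deliver this. First, the variables in one Wick factor also appear in other factors, so the factors cannot be summed out ``independently''; one has to organize the summation along the constraint graph induced by the pairing. Second, and more importantly, in the correlated case a Wick pairing does \emph{not} force edges of the walk to coincide (all $2k$ indices may be distinct and still contribute), so the relevant combinatorics is not a count of closed walks with a given number of distinct vertices; what matters is how many index variables can be summed freely versus paid for by summability, pairing by pairing. The crude uniform bound ``each pairing contributes at most $(\sum|R|)^k N^{k+1}$'' times the number $(2k-1)!!$ of pairings gives $C_k^{1/(2k)}\asymp\sqrt k\asymp\sqrt{\log N}$, i.e.\ only $\E\|W_N\|=O(\sqrt{N\log N})$, which does not prove the lemma as stated. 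To remove the logarithm you would need a F\"uredi--Koml\'os-type refinement showing that pairings with ``deficiency'' $d$ lose a full factor $N^{d}$ so as to offset their count --- in the correlated setting this is exactly the nontrivial content of the norm bounds in the dependent-entry literature (e.g.\ Erd\H{o}s--Kr\"uger--Schr\"oder, Reker), and it is asserted rather than proved in your sketch. Your fallback of invoking Theorem 2.4 of \chs also does not work: that theorem gives the compactly supported LSD, which bounds the largest eigenvalue only from below, not from above.

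For comparison, the paper avoids trace combinatorics entirely: it uses an $\vep$-net of $S^{N-1}$ of cardinality $(1+2/\vep)^N$, notes that for each fixed unit vector $x$ the quadratic form $x'Wx$ is a single centered Gaussian whose variance is bounded by $4\sum_{u,v}|R(u,v)|$ uniformly in $x$ and $N$ (this is where absolute summability enters, via Cauchy--Schwarz), and then beats the exponential size of the net with the Gaussian tail by a union bound. If you want to salvage your route, you would either have to carry out the correlated FK-type moment estimate in full, or switch to an argument of this net-plus-uniform-variance type.
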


\begin{proof} Here and henceforth,  $N$ in the subscript of $W_N$ is suppressed.  
The spectral theorem for Hermitian matrices implies
\begin{equation}
\label{l1.eq1}\|W\|=\max_{v\in S^{N-1}}|v'Wv|\,.
\end{equation}
For a fixed $0<\vep<1/2$,  Lemma 1.4.2 of \cite{chafai:guedon:lecue:pajor:2012} shows that  there exists $S(\vep,N)\subset S^{N-1}$ with
\begin{equation}
\label{l1.eq2}\#S(\vep,N)\le\left(1+\frac2\vep\right)^N,
\end{equation}
where $\#$ denotes cardinality, and
\[
S^{N-1}\subset\bigcup_{x\in S(\vep,N)}\{y\in\bbr^N:\|y-x\|\le\vep\}\,.
\]

Standard arguments show that
\[
\|W\|\le(1-2\vep)^{-1}\max_{x\in S(\vep, N)}|x'Wx|;
\]
see \cite[Exercise 4.4.3(b)]{vershynin2018high}, for example, for the details.

The above inequality combined with \eqref{l1.eq2} and a union bound implies that for $t>0$,
\begin{align}\notag
P\left(\|W\|>t\sqrt N\right)&\le\left(1+\frac2\vep\right)^N\max_{x\in S^{N-1}}P\left(|x'Wx|>t(1-2\vep)\sqrt N\right)\\
\label{l1.eq3}&\le\left(1+\frac2\vep\right)^Ne^{-\delta t^2(1-2\vep)^2N}\max_{x\in S^{N-1}}\E\left(e^{\delta(x'Wx)^2}\right)\,,
\end{align}
the second line following from the Markov inequality,  where $\delta>0$ will be chosen later.  

For all $x=(x_1,\ldots,x_N)\in S^{N-1}$,  $x'Wx$ is a zero mean normal random variable whose variance can be estimated as follows.  Letting $x_i=0$ for all $i\in\bbz\setminus\{1,\ldots,N\}$,  observe
\begin{equation}
\label{l1.eq4} x'Wx=2\sum_{i\in\bbz}\sum_{j\in\bbz}x_ix_jX_{ij}\,,
\end{equation}
and hence
\begin{align*}
\Var\left(x'Wx\right)
&=4\Var\left(\sum_{i, j \in\bbz}x_ix_jX_{ij}\right)\\
&=4\sum_{i, j,k, l\in\bbz}x_ix_jx_kx_lR(i-k,j-l)\\
&\le4\sum_{i, j,k,l\in\bbz}\left|x_ix_jx_kx_lR(i-k,j-l)\right|\\
&=4\sum_{u, v\in\bbz}|R(u,v)|\sum_{i\in\bbz}\left|x_ix_{i+u}\right|\sum_{j\in\bbz}\left|x_jx_{j+v}\right|\\
&\le4\sum_{u, v\in\bbz}|R(u,v)|\,,
\end{align*}
the inequality in the last line being implied by Cauchy-Schwartz and the fact $\sum_ix_i^2=1$.  

Choose 
\[
0<\delta\le\frac1{16}\left(\sum_{u, v\in\bbz}|R(u,v)|\right)^{-1}\,,
\] 
which is possible as the extreme right hand side above is positive because of \eqref{setup.eq1}.  The choice of $\delta$ ensures
\[
\delta\,\Var(x'Wx)\le\frac14\,,
\]
and hence,
\begin{align*}
\E\left(e^{\delta(x'Wx)^2}\right)&=\left(1-2\delta\, \Var(x'Wx)\right)^{-1/2}
\le\sqrt2\,.
\end{align*}
Fixing $\delta$ as above and plugging this in \eqref{l1.eq3}, we get
\[
\lim_{N\to\infty}P\left(\|W\|>t\sqrt N\right)=0\,,\text{ if }t>(1-2\vep)^{-1}\delta^{-1/2}\log^{1/2}(1+2/\vep)\,,
\]
which completes the proof.  
\end{proof}

For the next two lemmas,  $\one_N$ denotes the column vector of length $N$ with each entry $1$.  As usual,  `$N$' will be suppressed to keep the notation simple.

\begin{lemma}\label{l2}
As $N\to\infty$,
\[
N^{-2}\Var\left(\one'W\one\right)\to \sigma^2\,.
\]
\end{lemma}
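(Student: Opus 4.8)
The plan is to compute $\Var(\one'W\one)$ directly from the Gaussian covariance structure and extract the leading $N^2$ term. Recall from \eqref{l1.eq4}, applied with $x=\one$ (i.e.\ $x_i=1$ for $1\le i\le N$ and $0$ otherwise), that
\[
\one'W\one=2\sum_{i=1}^N\sum_{j=1}^N X_{ij}\,.
\]
Hence, exactly as in the variance computation inside the proof of Lemma \ref{l1},
\[
\Var\left(\one'W\one\right)=4\sum_{i=1}^N\sum_{j=1}^N\sum_{k=1}^N\sum_{l=1}^N R(i-k,j-l)\,.
\]
The plan is then to rewrite this fourfold sum by the change of variables $u=i-k$, $v=j-l$, collecting the number of pairs $(i,k)\in\{1,\dots,N\}^2$ with $i-k=u$, which is $(N-|u|)^+$, and similarly for $(j,l)$. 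This gives
\[
\Var\left(\one'W\one\right)=4\sum_{|u|<N}\sum_{|v|<N}(N-|u|)(N-|v|)R(u,v)\,.
\]

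The remaining step is to show that the right-hand side is asymptotically $4N^2\sum_{u,v}R(u,v)=N^2\sigma^2$. Writing $(N-|u|)(N-|v|)=N^2\big(1-\tfrac{|u|}{N}\big)\big(1-\tfrac{|v|}{N}\big)\one\{|u|<N,|v|<N\}$, I would divide by $N^2$ and argue that
\[
\sum_{|u|<N}\sum_{|v|<N}\Big(1-\frac{|u|}{N}\Big)\Big(1-\frac{|v|}{N}\Big)R(u,v)\longrightarrow\sum_{u\in\bbz}\sum_{v\in\bbz}R(u,v)
\]
as $N\to\infty$. This is a dominated-convergence argument on the index set $\bbz^2$ equipped with counting measure: for each fixed $(u,v)$ the summand converges to $R(u,v)$, and it is dominated in absolute value by $|R(u,v)|$, which is summable by the Assumption \eqref{setup.eq1}. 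Multiplying through by $4$ and recalling the definition of $\sigma^2$ finishes the proof.

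The main obstacle here is not conceptual but bookkeeping: one must be careful that the combinatorial factor counting pairs $(i,k)$ with prescribed difference is exactly $(N-|u|)$ when $|u|<N$ and $0$ otherwise, and that the truncation of the sum to $|u|,|v|<N$ is harmless in the limit. Both are handled cleanly by the dominated convergence argument above, since the indicator $\one\{|u|<N\}\to1$ pointwise and the dominating function $|R(u,v)|$ absorbs it. There is also a minor point that stationarity of $(Z_{ij})$ is what makes $\Cov(X_{ij},X_{kl})=R(i-k,j-l)$ depend only on the differences; this is already used in Lemma \ref{l1} and needs no further comment.
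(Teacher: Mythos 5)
Your proposal is correct and follows essentially the same route as the paper: the same identity $\one'W\one=2\sum_{i,j}X_{ij}$, the same change of variables giving the factor $(N-|u|)(N-|v|)$, and the same use of the summability assumption \eqref{setup.eq1} to pass to the limit (the paper phrases the final step via the explicit bound $4\sum_{u,v}\left(\tfrac{|u|}N\wedge1\right)\left(\tfrac{|v|}N\wedge1\right)|R(u,v)|\to0$, which is just your dominated convergence argument made quantitative). No gaps.
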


\begin{proof}
It follows from \eqref{l1.eq4} that
\begin{align}\notag
\Var\left(\one'W\one\right)&=4\Var\left(\sum_{i, j=1}^NX_{ij}\right)\\
&\notag=4\sum_{i, j,k,l=1}^N\Cov(X_{ij},X_{k,l})\\
\label{l1.eq5}&=4\sum_{i, j,k,l=1}^NR(i-k,j-l)\\
\notag&=4\sum_{u=1-N}^{N-1}\sum_{v=1-N}^{N-1}R(u,v)\\
\notag&\hspace{1in}\#\{(i,j,k,l)\in\{1,\ldots,N\}^4:i-k=u,j-l=v\}\\
\notag&=4\sum_{u, v=1-N}^{N-1}(N-|u|)(N-|v|)R(u,v)\,.
\end{align}
Thus,
\begin{equation}
\label{l1.eq6}\left|N^{-2}\Var\left(\one'W\one\right)-\sigma^2\right|\le4\sum_{u, v\in\bbz}\left(\frac{|u|}N\wedge1\right)\left(\frac{|v|}N\wedge1\right)|R(u,v)|\,.
\end{equation}
As $N\to\infty$,  \eqref{setup.eq1} implies the right hand side goes to zero.  Hence the proof follows.
\end{proof}

The above lemma and \eqref{setup.eq2} ensure $\sigma^2>0$.  The proof of the next lemma also follows from a similar mean and variance calculation.

\begin{lemma}\label{l3}
As $N\to\infty$,
\[
N^{-2}\one'W^2\one\prob2\alpha\theta\,.
\]
\end{lemma}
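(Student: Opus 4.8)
\emph{Proof proposal.} The plan is to realize $\one'W^2\one$ as a sum of squares of centred Gaussians and establish convergence in $L^2$, which is stronger than the asserted convergence in probability. Since $W$ is symmetric, $\one'W^2\one=\|W\one\|^2=\sum_{i=1}^N Y_i^2$, where
\[
Y_i:=(W\one)_i=\sum_{j=1}^N\big(X_{ij}+X_{ji}\big)\,,\qquad 1\le i\le N\,,
\]
is a mean-zero Gaussian random variable. It therefore suffices to prove the two estimates
\[
N^{-2}\sum_{i=1}^N\Var(Y_i)\longrightarrow 2\alpha\theta\qquad\text{and}\qquad\Var\Big(\sum_{i=1}^N Y_i^2\Big)=o(N^4)\,,
\]
the first of which gives $\E\big(N^{-2}\one'W^2\one\big)\to 2\alpha\theta$ and the two of which together give the $L^2$-convergence.

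For the first estimate I would use $\Cov(X_{ij},X_{kl})=R(i-k,j-l)$ to expand
\[
\Var(Y_i)=\sum_{j=1}^N\sum_{k=1}^N\big[R(0,j-k)+R(j-k,0)+R(i-k,j-i)+R(j-i,i-k)\big]\,.
\]
Summing over $i$ and dividing by $N^2$: the $R(0,j-k)$ and $R(j-k,0)$ terms contribute, in the limit, $\sum_{v\in\bbz}R(0,v)$ and $\sum_{u\in\bbz}R(u,0)$ respectively, by the same dominated-convergence argument that produces \eqref{l1.eq6} from \eqref{setup.eq1}; the $R(i-k,j-i)$ and $R(j-i,i-k)$ terms are, for each fixed $i$, bounded in absolute value by $\sum_{u,v\in\bbz}|R(u,v)|$ (summing over $j$ first, then over $k$), hence contribute $O(N)$ after the sum over $i$ and are negligible. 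Adding up and invoking the definition of $\alpha$ gives the limit $\sum_{i\in\bbz}\big(R(i,0)+R(0,i)\big)=2\alpha\theta$.

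For the variance, the $Y_i$ are jointly centred Gaussian, so $\Cov(Y_i^2,Y_{i'}^2)=2\Cov(Y_i,Y_{i'})^2$ and hence $\Var\big(\sum_i Y_i^2\big)=2\sum_{i,i'=1}^N\Cov(Y_i,Y_{i'})^2$. Expanding $\Cov(Y_i,Y_{i'})$ into four $R$-sums exactly as above, two of them are bounded by $Ng(i-i')$ and $Nh(i-i')$, where $g(u):=\sum_{v\in\bbz}|R(u,v)|$ and $h(v):=\sum_{u\in\bbz}|R(u,v)|$ are summable, hence square-summable (because $g(u)\le\sum_u g(u)$, and likewise for $h$), while the other two are bounded by $\sum_{u,v\in\bbz}|R(u,v)|$ uniformly in $i,i'$. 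Thus $\Cov(Y_i,Y_{i'})^2\le C\big(N^2\big((g+h)(i-i')\big)^2+1\big)$ for some constant $C$, and summing over $i,i'\in\{1,\ldots,N\}$ while using $\sum_{i,i'=1}^N f(i-i')\le N\sum_{u\in\bbz}f(u)$ for nonnegative $f$ yields $\Var\big(\sum_i Y_i^2\big)=O(N^3)$, so $\Var\big(N^{-2}\one'W^2\one\big)=O(N^{-1})\to 0$.

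The step I expect to be the main obstacle is the combinatorial separation, in both estimates, of the ``diagonal-type'' kernel values $R(0,\cdot)$ and $R(\cdot,0)$ — which accumulate a factor $N$ for each index $i$ and thereby drive the limit $2\alpha\theta$ — from the genuinely two-dimensional values $R(i-k,j-i)$, whose two arguments vary over effectively independent index ranges and thus keep the corresponding sums $O(1)$. Performing this bookkeeping carefully simultaneously identifies the constant $2\alpha\theta$ and delivers the $o(N^4)$ bound on the variance; beyond elementary estimates, the only input needed is the absolute summability \eqref{setup.eq1}.
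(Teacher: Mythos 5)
Your proof is correct, and for the variance part it takes a genuinely different and arguably cleaner route than the paper. The paper expands $\one'W^2\one$ as the triple sum \eqref{l3.eq3}, proves the mean asymptotics \eqref{l3.eq1} by the same bookkeeping you perform (separating the kernel values $R(\cdot,0)$ and $R(0,\cdot)$, which pick up an extra factor of $N$, from the genuinely two-dimensional terms, which stay $O(N)$), and then establishes $\Var(\one'W^2\one)=o(N^4)$ by applying Wick's formula to the full sextuple sum $\E\big[(\one'W^2\one)^2\big]$, deferring the detailed limit computation to an external reference. You instead write $\one'W^2\one=\sum_{i=1}^N Y_i^2$ with $Y_i=(W\one)_i$ jointly centred Gaussian and use the identity $\Cov(Y_i^2,Y_{i'}^2)=2\,\Cov(Y_i,Y_{i'})^2$, reducing the variance bound to an estimate on the row covariances $\Cov(Y_i,Y_{i'})$; the split into the terms bounded by $N\,g(i-i')$, $N\,h(i-i')$ (with $g,h$ summable, hence square-summable) and the terms bounded uniformly by $\sum_{u,v}|R(u,v)|$ is sound, and it yields the explicit bound $\Var\big(\sum_i Y_i^2\big)=O(N^3)$, sharper than the $o(N^4)$ the paper needs and fully self-contained (no appeal to an outside computation). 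Your mean calculation, including the identification of the limit $\sum_{i\in\bbz}\big(R(i,0)+R(0,i)\big)=2\alpha\theta$, matches the paper's in substance, differing only in which index plays the role of the common row/column label; both arguments use only \eqref{setup.eq1} and Gaussianity, as they should.
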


\begin{proof}  The proof would follow once it can be shown that
\begin{equation}
\label{l3.eq1} \lim_{N\to\infty}N^{-2}\E\left(\one'W^2\one\right)=\sum_{i=-\infty}^\infty \big( R(i,0)+R(0,i)\big)\,,
\end{equation}
and
\begin{equation}
\label{l3.eq2}\Var\left(\one'W^2\one\right)=o(N^4)\,.
\end{equation}
For $N$ fixed,
\begin{align}\notag
\one'W^2\one&=\sum_{i, j,k=1}^N W(i,j)W(j,k)\\
\label{l3.eq3}&=\sum_{i, j,k=1}^N\left(X_{ij}X_{kj}+X_{ji}X_{jk}\right)+2\sum_{i, j,k=1}^N X_{ij}X_{jk}\,.
\end{align}

Proceeding towards \eqref{l3.eq1}, write
\begin{align*}
\sum_{i, j,k=1}^N\E\left(X_{ij}X_{kj}\right)&=\sum_{i, j,k=1}^N\Cov(Z_{ij},Z_{kj})\\
&=N\sum_{i, k=1}^NR(i-k,0)\,.
\end{align*}
An argument similar to \eqref{l1.eq5}-\eqref{l1.eq6} shows
\[
\lim_{N\to\infty}N^{-2}\sum_{i, j,k=1}^N\E\left(X_{ij}X_{kj}\right)=\sum_{i=-\infty}^\infty R(i,0)\,,
\]
and in the same vein it can be shown that
\[
\lim_{N\to\infty}N^{-2}\sum_{i,j,k=1}^N\E\left(X_{ji}X_{jk}\right)=\sum_{i=-\infty}^\infty R(0,i)\,.
\]
Combining these with \eqref{l3.eq3} and the observation that 
\begin{align*}
\left|\sum_{i, j,k=1}^N\E(X_{ij}X_{jk})\right|&=\left|\sum_{i, j,k=1}^N R(i-j,j-k)\right|\\
&\le\sum_{j=1}^N\sum_{i,k=-\infty}^\infty|R(i-j,j-k)|\\
&=N\sum_{u, v=-\infty}^\infty|R(u,v)|\,,
\end{align*}
and hence \eqref{l3.eq1} follows.

For showing \eqref{l3.eq2}, expand the expectation of the square of the first term in the first summation of \eqref{l3.eq3} with the help of Wick's formula (\cite[Chapter 1, Theorem 1]{mingo2017free}) as
\begin{align*}
&\E\left[\left(\sum_{i, j,k=1}^N X_{ij}X_{kj}\right)^2\right]\\
&=\sum_{i, j,k=1}^N\sum_{i', j', k'=1}^N\E\left(X_{ij}X_{kj}X_{i'j'}X_{k'j'}\right)\\
&=\sum\Big(R(i-k,0)R(i'-k',0)+R(i-i',j-j')R(k-k',j-j')\\
&\hspace{1in}+R(i-k',j-j')R(k-i',j-j')\Big)\,,
\end{align*}
where the sum in the penultimate line is over all $i,j,k,i',j',k'$ from $1$ to $N$. An argument, which has been  used already a few times,  shows that
\[
\lim_{N\to\infty}N^{-4}\E\left[\left(\sum_{i, j,k=1}^N X_{ij}X_{kj}\right)^2\right]=\left(\sum_{i=-\infty}^\infty R(i,0)\right)^2\,.
\]
The details of the proof in a more general situation is given in the arXiv version of \cite{chakrabarty:hazra:sarkar:2015}; see (4.33) in the proof of Theorem 2.1 therein. A similar calculation can be done for the other terms in the expansion of the square of \eqref{l3.eq3}, which would show that
\[
\lim_{N\to\infty}N^{-4}\E\left[(\one'W^2\one)^2\right]=\left[\sum_{i=-\infty}^\infty \big( R(i,0)+R(0,i)\big)\right]^2\,.
\]
This in conjunction with \eqref{l3.eq1} establishes \eqref{l3.eq2} and thus completes the proof.
\end{proof}

\begin{proof}[Proof of Theorem \ref{t1}]
Rewrite \eqref{proof.eq1} as
\[
A_N-2\theta\one\one'=W,
\]
where $W=W_N$. Since $\theta>0$, the largest eigenvalue of $2\theta\one\one'$ is $2N\theta$, which implies 
\[
|\lambda_1-2N\theta|\le\|W\|\,,
\]
where $\lambda_1=\lambda_1(A_N)$.  Dividing both sides by $N$ and using Lemma \ref{l1}, we get
\begin{equation}
\label{t1.eq1} N^{-1}\lambda_1\prob2\theta\,,N\to\infty\,.
\end{equation}

Let $v$ be an eigenvector (normalized to have norm $1$) of $A_N$ corresponding to its largest eigenvalue,  satisfying
\begin{equation}
\label{proof.eq0}\one'v\ge0\,.
\end{equation}
Thus,
\begin{equation}
\label{proof.eq2}\lambda_1v=A_Nv=Wv+2\theta(\one'v)\one\,.
\end{equation}
Pre-multiply both sides by $v'$ and use the fact that $v$ is a unit-norm vector to get
\[
\lambda_1=v'Wv+2\theta(\one'v)^2\,.
\]
Dividing throughout by $N$ and using \eqref{t1.eq1} in conjunction with Lemma \ref{l1} yield
\[
N^{-1}2\theta(\one'v)^2\prob2\theta.
\]
Divide both sides by $2\theta$ and take the positive square root to conclude
\begin{equation}
\label{proof.eq3}N^{-1/2}(\one'v)\prob1,
\end{equation}
\eqref{proof.eq0} ensuring that $\one'v$ appears in the left hand side when the positive root is taken.

A consequence of \eqref{proof.eq2} is
\[
(\lambda_1I-W)v=2\theta(\one'v)\one\,.
\]
Lemma \ref{l1} and \eqref{t1.eq1} together show 
\begin{equation}
\label{t1.eq2}\lambda_1^{-1}\|W\|\prob0\,.
\end{equation}
Hence $(\lambda_1I-W)$ is invertible whp (with high probability, that is, probability goes to $1$ as $N\to\infty$).  

Therefore,  whp,
\[
v=2\theta(\one'v)(\lambda_1I-W)^{-1}\one\,.
\]
Pre-multiply both sides by $\one'$ to get
\[
\one'v=2\theta(\one'v)\one'(\lambda_1I-W)^{-1}\one\,.
\]
It follows from \eqref{proof.eq3} that $\one'v\neq0$ whp.  Thus,
\[
1=2\theta\one'(\lambda_1I-W)^{-1}\one\,,
\]
which implies 
\[
\lambda_1=2\theta\one'(I-\lambda_1^{-1}W)^{-1}\one\,.
\]
Use \eqref{t1.eq2} again to expand the right hand side to get
\begin{align}\notag
\lambda_1&=2\theta\one'\left(\sum_{n=0}^\infty\lambda_1^{-n}W^n\right)\one\\
\notag&=2\theta\one'\left(\sum_{n=0}^2\lambda_1^{-n}W^n+\lambda_1^{-3}W^3(I-\lambda_1^{-1}W)^{-1}\right)\one\\
\label{eq.expansion}&=2N\theta+2\theta\lambda_1^{-1}\one'W\one+2\theta\lambda_1^{-2}\one'W^2\one\\
\notag& \hspace{1in}+2\theta\lambda_1^{-3}\one'W^3(I-\lambda_1^{-1}W)^{-1}\one\,.
\end{align}

Combine \eqref{t1.eq1} with Lemma \ref{l2} to get
\begin{equation}
\label{t1.eq3} 2\theta\lambda_1^{-1}\one'W\one\Rightarrow N(0,\sigma^2)\,,N\to\infty\,.
\end{equation}
Similarly,  \eqref{t1.eq1} and Lemma \ref{l3} together imply 
\begin{equation}
\label{t1.eq4} 2\theta\lambda_1^{-2}\one'W^2\one\prob\alpha\,, N\to\infty\,.
\end{equation}
Finally, since $\|\cdot\|$ is the operator norm, which is sub-multiplicative, and $\one$ is a vector of norm $\sqrt N$, we get
\begin{align*}
\left|\lambda_1^{-3}\one'W^3(I-\lambda_1^{-1}W)^{-1}\one\right|&\le|\lambda_1|^{-3}N\|W\|^3\|(I-\lambda_1^{-1}W)^{-1}\|\\
&=O_p\left(N^{-2}\|W\|^3\right)\\
&\prob0\,,N\to\infty\,,
\end{align*}
\eqref{t1.eq1} and \eqref{t1.eq2} implying the second line and the last line following from Lemma \ref{l1}. Plug \eqref{t1.eq3} and \eqref{t1.eq4} in \eqref{eq.expansion} to get
\[
\lambda_1-2N\theta\Rightarrow N(\alpha,\sigma^2)\,,
\]
as desired.
\end{proof}

\begin{remark}
A close look at the above proof reveals that it goes through even if the claim of Lemma \ref{l1} is reduced to 
\[
\|W\|=O_p\left(N^{1-\vep}\right)
\]
for some $\vep>0$. In other words, \eqref{setup.eq1} can possibly be relaxed to something under which a claim similar to that of Theorem \ref{t1}, albeit with a scaling and a different centering, still holds. In this situation, \eqref{eq.expansion} will have to be replaced by
\[
\lambda_1=2\theta\sum_{i=0}^n\lambda_1^{-i}\one'W^i\one+2\theta\lambda_1^{-n-1}\one'W^{n+1}(I-\lambda_1^{-1}W)^{-1}\one\,,
\]
where $n=[1/\vep]$, that is, the largest integer less than or equal to $1/\vep$. 

Such scenarios, and the behavior of the largest eigenvalue under more general correlation structures, are explored in a recent preprint \cite{banerjee2024edge}.
\end{remark}

\section*{Acknowledgement} The authors thank an anonymous referee for a careful reading of the manuscript and suggesting changes that have helped to improved it. The research of the second author was supported through NWO Gravitation Grant NETWORKS 024.002.003. The research of the third author is supported by SERB/CRG/2021/006785, from the Science and Engineering Research Board, Government of India.


\end{document}